\newcommand{\authorfootnotes}{\renewcommand\thefootnote{\@fnsymbol\c@footnote}}%
\theoremstyle{plain}
\DeclareMathOperator{\sll}{SL}
\DeclareMathOperator{\gl}{GL}
\DeclareMathOperator{\psl}{PSL}
\newtheorem{theorem}{Theorem}[section]
\newtheorem{fact}[theorem]{Fact}
\newtheorem*{theorem*}{Theorem}
\newtheorem*{maintheorem*}{Main Theorem}
\newtheorem*{lemma*}{Lemma}
\newtheorem*{intconj*}{Intermediate Conjecture}
\newtheorem*{prop*}{Proposition}
\newtheorem*{conj*}{Principal Conjecture}
\newtheorem*{thm3.1*}{Theorem 3.1}
\newtheorem*{thm3.3*}{Theorem 3.3}
\theoremstyle{definition}
\newtheorem{remark}[theorem]{Remark}
\newtheorem*{remark*}{Remark}
\newtheorem{case}{Case}
\newtheorem{step}{Step}
\newtheorem{claim}{Claim}
\newtheorem*{claim*}{Claim}
\begin{document}
\begin{center}
  \LARGE
A note on conjugacy problem for finite Sylow subgroups of infinite linear groups \par \bigskip

  \normalsize
 \authorfootnotes

 P\i nar U\u{G}URLU KOWALSKI \footnote{Correspondence: pinar.ugurlu@bilgi.edu.tr

 2010 \emph{Mathematics Subject Classification}: 03C20, 20G99.}

  Department of Mathematics, \.{I}stanbul Bilgi  University, \.{I}stanbul,  Turkey \par \bigskip

\end{center}


\begin{abstract}
We prove the conjugacy of Sylow $2$-subgroups in  pseudofinite $\mathfrak{M}_c$ (in particular linear) groups under the assumption that there is at least one finite Sylow $2$-subgroup.  We observe the importance of the pseudofiniteness assumption by analyzing  an example of a linear group with non-conjugate finite Sylow $2$-subgroups which was constructed by Platonov.
\end{abstract}

\section{Introduction}
Sylow theory was originated and developed in the world of finite groups. There is also some work on a possible generalization to infinite   groups (for a comprehensive survey see  \cite{algebra}). While in some particular families of infinite groups conjugacy results hold for Sylow subgroups, there are pathological situations (non-conjugate Sylow $p$-subgroups) even in the case of linear groups. However, existence of a finite Sylow $p$-subgroup yields conjugacy results in some classes of groups (e.g. groups of finite Morley rank for $p=2$ \cite[Lemma 6.6]{abc} and locally finite groups \cite[Proposition 2.2.3]{dixon}). In this paper, we show that this existence assumption gives the desired conjugacy result for Sylow $2$-subgroups in the case of  pseudofinite $\mathfrak{M}_c$-groups.  We also present an interesting example constructed by Platonov  \cite[Example 4.11]{platonov} which shows that having a finite Sylow $2$-subgroup does not guarantee conjugacy in the case of linear groups.

The main result of this paper is stated below.

\begin{thm3.3*}
If one of the Sylow $2$-subgroups of a pseudofinite  $\mathfrak{M}_c$-group $G$  is finite then all Sylow 2-subgroups of $G$ are conjugate and hence finite.
\end{thm3.3*}

The structure of this paper is as follows.

In the second section, we recall some of the basic notions in group theory   and we fix our terminology and notation.

In the  third section, we emphasize  some properties of  pseudofinite groups and provide some (non-)examples. Then we  state and prove  our main result (Theorem 3.3).

In the last section,  we analyze an example constructed by  Platonov \cite[Example 4.11]{platonov} in detail.

\section{Preliminaries}
In  this section, we recall definitions of some basic notions in group theory and list some well-known results which will be needed in the sequel.

 A group $G$ is called  \emph{periodic} if every element of it has finite order and  it is called a \emph{$p$-group} for a prime $p$, if each element of $G$ has order $p^n$ for a natural number  $n$. An example of an infinite $p$-group is the Pr\"{u}fer $p$-group, denoted by $C_{p^{\infty}}$. By a \emph{Sylow $p$-subgroup} of a group $G$,  we mean a maximal $p$-subgroup of $G$. Note that existence of  Sylow $p$-subgroups is guaranteed by Zorn's lemma.

A group $G$ is said to satisfy \emph{normalizer condition} if any proper subgroup is properly contained in its normalizer. It is well-known that finite nilpotent groups satisfy normalizer condition.

  Let $\mathcal{P}$ denote a group theoretical property such as solvability, nilpotency, commutativity, finiteness etc.  A group $G$ is called \emph{locally $\mathcal{P}$} if every finite subset of $G$ generates a subgroup with the property $\mathcal{P}$. A group $G$ is said to be \emph{$\mathcal{P}$-by-finite} if $G$ has a normal subgroup $N$ with the property $\mathcal{P}$ such that the quotient group $G/N$ is finite.

 A  \emph{linear group} is a subgroup $G \leqslant \gl_n(F)$  for some field $F$ where $\gl_n(F)$ denotes the general linear group over $F$.  A group $G$ is said to satisfy \emph{descending chain condition on centralizers} (or \emph{minimal condition on centralizers})  if every proper chain of centralizers in $G$ stabilizes after finitely many steps and such groups  are called \emph{$\mathfrak{M}_c$-groups}.  If moreover there is a global finite bound on the length of such chains, then we say that $G$ has \emph{finite centralizer dimension}. 
 It is well-known that any linear group  has finite centralizer dimension (see for example the remark after Corollary 2.10 in \cite{wehf}) and hence  the class of $\mathfrak{M}_c$-groups contain the class of linear groups.

The following results about $\mathfrak{M}_c$-groups which generalize the corresponding classical results for linear groups will be needed in the sequel.


\begin{fact} [Wagner,  Corollary 2.4 in \cite{wagner1}]  \label{wagner}
Sylow $2$-subgroups of $\mathfrak{M}_c$-groups  are locally finite.
\end{fact}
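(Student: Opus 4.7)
The plan is to reduce the claim to showing the following: every finitely generated 2-subgroup $H$ of an $\mathfrak{M}_c$-group is finite. Local finiteness of a Sylow 2-subgroup is then immediate, since every finitely generated subgroup of it is such an $H$. The first step is to note that $H$ itself inherits the $\mathfrak{M}_c$-property, because $C_H(X) = C_G(X)\cap H$ for every $X\subseteq H$, so a strictly descending chain of centralizers in $H$ pulls back to a strictly descending chain of centralizers in $G$. Thus the task becomes: show that every finitely generated 2-group with descending chain condition on centralizers is finite.

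To attack this, I would exploit the DCC to extract an abelian ``base''. By DCC, the family $\{C_H(X):X\subseteq H\}$ has minimal elements; fix one, say $A:=C_H(X_0)$. The subgroup $A$ is necessarily abelian, because for any $a\in A$ the centralizer $C_H(X_0\cup\{a\})$ sits inside $A$ and is itself a centralizer, so by minimality it equals $A$, which shows that $a$ commutes with every element of $A$. Now $H$ acts on $A$ by conjugation with kernel $C_H(A)$, and the action factors through $\aut(A)$. If $A$ is finite, then $\aut(A)$ is finite, so $H/C_H(A)$ is finite, and one can iterate the analysis on $C_H(A)$, using the DCC to guarantee termination after finitely many steps and the fact that a finite extension of a finite group is finite.

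The main obstacle, as I see it, is to verify that the abelian base $A$ is genuinely finite. A priori, $A$ could contain a copy of $C_{2^{\infty}}$ and be infinite, and neither the 2-group structure alone nor the DCC alone rules this out. Wagner's argument circumvents this by exploiting the interplay between finite generation of $H$ and the 2-group structure: every finite 2-subgroup is nilpotent with nontrivial center, so by a careful use of DCC one can extract a finite characteristic piece of $A$ (for instance, a finitely generated elementary abelian subgroup of the socle) that captures the entire $H$-action. Making this extraction precise --- balancing the finitely generated ``top'' against a potentially infinite abelian ``bottom'' --- is the delicate step, and is where I would expect to spend the bulk of the work.
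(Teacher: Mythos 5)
This statement is quoted in the paper as an external result (Wagner, Corollary 2.4 in \cite{wagner1}); the paper itself gives no proof, so your attempt has to stand on its own, and as it stands it has a genuine gap. Your reductions are fine: local finiteness of a Sylow $2$-subgroup does follow once one knows that every finitely generated $2$-subgroup is finite, and the $\mathfrak{M}_c$ condition does pass to subgroups via $C_H(X)=C_G(X)\cap H$. But the engine you propose does not turn over. First, the ``minimal centralizer'' $A$ degenerates: the family $\{C_H(X): X\subseteq H\}$ contains $C_H(H)=Z(H)$, which is contained in every other member, so the unique minimal element is $Z(H)$ itself. Then $C_H(A)=C_H(Z(H))=H$, the conjugation action you introduce is trivial, and ``iterating the analysis on $C_H(A)$'' returns you to $H$ with no progress; the DCC cannot force termination of an iteration that never moves. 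Even if one repaired this by choosing a different abelian normal piece at each stage, the chain of centralizers would stabilize at some subgroup $K$ for which the same degeneration occurs, and you would still need to show that this $K$ is finite.

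Second, and more fundamentally, the step you yourself flag as ``the main obstacle'' --- that the abelian piece is genuinely finite, or equivalently that a finitely generated $2$-group with the minimal condition on centralizers cannot be infinite --- is exactly the content of Wagner's theorem, and nothing in the sketch addresses it. This is not a technicality: infinite finitely generated $2$-groups exist (Grigorchuk's group, for instance), so the chain condition must be used in an essential way that the outline does not identify; appealing to ``nilpotence with nontrivial center of finite $2$-groups'' presupposes finiteness of the very subgroups in question. A finitely generated group can perfectly well have an infinite, non--finitely generated centre a priori, so the ``finitely generated top versus infinite abelian bottom'' tension is not resolved by anything written. In short, the proposal is a plausible-sounding frame around the theorem rather than a proof of it; the substantive argument would have to come from Wagner's actual machinery (e.g., the analysis of subgroups generated by involutions and of periodic locally nilpotent $\mathfrak{M}_c$-groups), which is precisely what the paper imports by citation.
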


\begin{fact} [Bryant, Theorem A in \cite{bryant}]  \label{nf}
 Periodic, locally nilpotent $\mathfrak{M}_c$-groups  are  nilpotent-by-finite.
\end{fact}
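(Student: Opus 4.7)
The result is Bryant's Theorem A, so the plan is to reconstruct its main ideas. Since $G$ is periodic and locally nilpotent, every finitely generated subgroup is a finite nilpotent group, so $G$ is locally finite. A standard property of periodic locally nilpotent groups then gives a decomposition $G = \bigoplus_{p} G_p$ into the direct sum of its primary components, where $G_p$ is the characteristic subgroup of $p$-elements. Each $G_p$ inherits the $\mathfrak{M}_c$ condition from $G$, and being nilpotent-by-finite is preserved under a finite direct product provided the nilpotency classes of the factors are bounded, so this decomposition is a natural place to start.

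The first substantive step is to show that $G_p$ is abelian for all but finitely many primes $p$. Assume, for contradiction, that $G_p$ is non-abelian for $p$ in an infinite set $\{p_1, p_2, \ldots\}$, and choose $g_i, h_i \in G_{p_i}$ with $[g_i, h_i] \neq 1$. Because elements of distinct primary components commute, $h_{n+1}$ centralizes $g_1, \ldots, g_n$ but not $g_{n+1}$, so the chain
\[
C_G(g_1) \supsetneq C_G(g_1, g_2) \supsetneq C_G(g_1, g_2, g_3) \supsetneq \cdots
\]
is strictly descending, contradicting $\mathfrak{M}_c$. Hence $G = G_{p_1} \times \cdots \times G_{p_k} \times A$ with $A$ abelian and thus nilpotent, and it suffices to prove that each $G_{p_i}$ is nilpotent-by-finite.

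The main remaining task is therefore to establish that a locally finite $p$-group $H$ satisfying $\mathfrak{M}_c$ is nilpotent-by-finite. A natural attack is to use $\mathfrak{M}_c$ to pick a centralizer $C = C_H(F)$ that is minimal among centralizers of finite subsets; for any $c \in C$, minimality forces $C_H(F \cup \{c\}) = C$, so $c \in Z(C)$ and $C$ is abelian. One then examines the normalizer $N_H(C)$, studies the conjugation action of $H$ on the finite (again by $\mathfrak{M}_c$) sequence of centralizers linking $C$ to $H$, and uses local finiteness to bound the finite orbits arising there, iteratively building a characteristic nilpotent subgroup of finite index.

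The principal obstacle lies precisely in this last step: $\mathfrak{M}_c$ is strictly weaker than the existence of a global finite bound on centralizer chain length, so one cannot simply invoke finite centralizer dimension to bound the class of the resulting subgroup uniformly. Bryant's argument circumvents this by combining the chain condition with local finiteness of $H$ to control the growth of the upper central series. Once each $G_{p_i}$ contains a normal nilpotent subgroup $N_{p_i}$ of finite index and bounded class, the direct product $N_{p_1} \times \cdots \times N_{p_k} \times A$ is a normal nilpotent subgroup of finite index in $G$, giving the conclusion.
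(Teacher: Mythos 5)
The paper does not actually prove this statement: it is imported verbatim as Fact~2.2, attributed to Bryant's Theorem~A, and used as a black box, so there is no internal proof to compare yours against. Judged on its own terms, your reduction is sound: periodicity plus local nilpotence gives local finiteness and the primary decomposition $G=\bigoplus_p G_p$; the $\mathfrak{M}_c$ condition passes to subgroups; and your strictly descending chain $C_G(g_1)\supsetneq C_G(g_1,g_2)\supsetneq\cdots$ correctly shows that only finitely many primary components can be non-abelian (a finite direct product of nilpotent-by-finite groups is nilpotent-by-finite, so the bounded-class proviso you mention is not even needed for a finite product). This correctly reduces the theorem to: a locally finite $p$-group $H$ satisfying $\mathfrak{M}_c$ is nilpotent-by-finite.

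The genuine gap is that this reduced statement is essentially the whole content of Bryant's theorem, and your proposal does not prove it. The observation that a minimal centralizer $C=C_H(F)$ is abelian is correct, but it is only the standard opening move of any $\mathfrak{M}_c$ argument. The passage from that abelian centralizer to a nilpotent normal subgroup of finite index is described only as ``examining the normalizer,'' ``studying the conjugation action,'' and ``iteratively building a characteristic nilpotent subgroup of finite index,'' and you explicitly concede that the principal obstacle is overcome by ``Bryant's argument.'' No mechanism is supplied for why the iteration terminates, why the subgroup it produces is nilpotent of any class, or why its index is finite; as you yourself note, $\mathfrak{M}_c$ gives no uniform bound on the lengths of centralizer chains, so something substantive is required at exactly this point and is missing. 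As written, the proposal is a correct reduction followed by an appeal to the theorem being proved.
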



\begin{fact}[Wagner, Fact 1.3 in \cite{wagner1}] \label{sol}
 A nilpotent-by-finite and locally nilpotent group has non-trivial center and satisfies the normalizer condition.
 \end{fact}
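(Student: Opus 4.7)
The plan is to transfer classical Sylow theory from the approximating finite groups to $G$ via pseudofiniteness, using Fact~\ref{wagner} as the bridge between infinite 2-subgroups and their finite sections. Fix a finite Sylow 2-subgroup $S$ of $G$ with $|S|=2^n$, and let $\prod_{\mathcal{U}} G_i \equiv G$ be an ultraproduct of finite groups witnessing pseudofiniteness.

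The first step is to encode ``$G$ has a maximal 2-subgroup of order $2^n$'' as a first-order sentence $\Sigma_n$ (existence of $2^n$ distinct elements closed under multiplication, with no $2^{n+1}$-extension likewise closed). Since any subgroup of order $2^{n+1}$ is automatically a 2-group by Lagrange, $G \models \Sigma_n$ is witnessed by $S$. By pseudofiniteness, $G_i \models \Sigma_n$ for $\mathcal{U}$-almost all $i$; Sylow's theorem in each such $G_i$ forces its Sylow 2-subgroups to have order exactly $2^n$ and rules out subgroups of order $2^{n+1}$. Since the non-existence of subgroups of order $2^{n+1}$ is itself a first-order statement, it transfers back to $G$.

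Now fix any Sylow 2-subgroup $P$ of $G$. By Fact~\ref{wagner}, $P$ is locally finite, so if $|P|>2^n$ then $2^{n+1}$ distinct elements of $P$ generate a finite 2-subgroup of order $\geq 2^{n+1}$, which yields a subgroup of $G$ of order $2^{n+1}$ and contradicts the previous step. Hence $P$ is finite of order $2^m \leq 2^n$, and applying $\Sigma_m$ via pseudofiniteness to the witness $P$ produces a single $G_i$ whose Sylow 2-subgroups would have both orders $2^n$ and $2^m$, forcing $m=n$. Finally, Sylow's conjugacy theorem in finite groups can be written as the first-order sentence $\Phi_n$: any two subgroups of order $2^n$, neither contained in a subgroup of order $2^{n+1}$, are conjugate. $\Phi_n$ holds in every finite group and hence in $G$, so applied to $S$ and $P$ it yields the required conjugation; all Sylow 2-subgroups of $G$ are thus conjugate, share the common order $2^n$, and in particular are finite.

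The main subtlety I anticipate is the careful first-order encoding: one must verify that $\Sigma_n$, the non-existence of subgroups of order $2^{n+1}$, and $\Phi_n$ each involve only quantification over a fixed finite number of elements, and that Lagrange legitimately identifies ``subgroup of prime-power order'' with ``$p$-subgroup''. Fact~\ref{wagner} then does the essential lifting from finite sections to all of $P$.
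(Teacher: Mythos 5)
Your proposal does not engage with the statement it is meant to prove. Fact~\ref{sol} is a purely group-theoretic assertion about an arbitrary abstract group: if $G$ is locally nilpotent and possesses a nilpotent normal subgroup of finite index, then $Z(G)\neq 1$ and every proper subgroup of $G$ is properly contained in its normalizer. Nothing in this statement involves pseudofiniteness, Sylow $2$-subgroups, ultraproducts, or first-order transfer, yet these are the only tools your argument deploys; conversely, your argument never mentions the center, the normalizer condition, or (local) nilpotency. What you have written is a proof sketch of the paper's main result, Theorem~\ref{main} (conjugacy and finiteness of all Sylow $2$-subgroups of a pseudofinite $\mathfrak{M}_c$-group, given one finite Sylow $2$-subgroup), not of Fact~\ref{sol}. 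As a proof of Fact~\ref{sol} it is therefore empty; the paper itself does not prove this fact but quotes it from Wagner, and it is used (via Remark~\ref{locfin}) as an ingredient in the proof of Theorem~\ref{main}, so it certainly cannot be derived from that theorem's machinery.

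A genuine proof runs along entirely different lines. Let $N\trianglelefteq G$ be nilpotent of finite index. Then $Z(N)\neq 1$ and $Z(N)\trianglelefteq G$; for $1\neq z\in Z(N)$ we have $N\leqslant C_G(z)$, so $z$ has finitely many conjugates $z_1,\dots,z_k$, all in $Z(N)$, and $A=\langle z_1,\dots,z_k\rangle$ is a non-trivial abelian normal subgroup of $G$. The subgroup $H$ generated by $z_1,\dots,z_k$ and a transversal of $N$ in $G$ is finitely generated, hence nilpotent by local nilpotency, so the non-trivial normal subgroup $A$ of $H$ meets $Z(H)$; any $1\neq a\in A\cap Z(H)$ centralizes both $N$ and the transversal, hence lies in $Z(G)$. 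Iterating this in quotients shows $G$ is hypercentral, and hypercentral groups satisfy the normalizer condition (if $H<G$, the first term $Z_\alpha(G)$ of the upper central series not contained in $H$ satisfies $[Z_\alpha(G),H]\leqslant Z_{\alpha-1}(G)\leqslant H$, so $Z_\alpha(G)$ normalizes $H$ and $N_G(H)>H$). Incidentally, your sketch does look like a plausible alternative route to Theorem~\ref{main} itself, encoding ``Sylow $2$-subgroup of order $2^n$'' as ``subgroup of order $2^n$ contained in no subgroup of order $2^{n+1}$'' and transferring Sylow's theorems by \L o\'{s}'s Theorem, with Fact~\ref{wagner} supplying finiteness of all Sylow $2$-subgroups; but that is a different statement from the one under review.
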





\begin{remark} \label{locfin}
 Note that Sylow $2$-subgroups of $\mathfrak{M}_c$-groups  are locally finite by Fact~\ref{wagner} and hence  locally nilpotent since finite $2$-groups are nilpotent. Therefore, they are nilpotent-by-finite by Fact~\ref{nf} and they satisfy the normalizer condition by Fact~\ref{sol}.
\end{remark}

\section{A conjugacy result for pseudofinite groups}
In this section, we briefly introduce    pseudofinite groups without giving precise definitions of the related notions (such as ultrafilters, ultraproducts and other basic model theoretical concepts) and we emphasize some properties of these groups which will be needed in the proof of the main result of this paper. We refer the reader to the books \cite{bells} and \cite{changk} for a detailed information about the ultraproduct construction, to \cite{ugurlu} for a more complete introduction to  pseudofinite groups and to \cite{wilson} for a more detailed discussion of these groups. 

 Pseudofinite groups are defined 
as infinite models of the theory of finite groups. These groups are group theoretical analogues of pseudofinite fields  which were introduced, studied and algebraically characterized by James Ax (see \cite{ax}). Unfortunately, such an algebraic characterization is not known for pseudofinite groups.

One can also describe (up to elementary equivalence) pseudofinite groups as  non-principal ultraproducts of finite groups  (see \cite{wilson} for details).  This description  together with \L o\'{s}'s Theorem \cite{los1} (which states that a first order formula is satisfied in the ultraproduct if and only if it is satisfied in the structures indexed by a set belonging to the ultrafilter)
allow us to logically characterize pseudofinite groups as infinite groups satisfying  the first order properties shared by \emph{almost all} (depending on the choice of an ultrafilter) of the finite groups.

A well-known example of a pseudofinite group is the additive group of the rational numbers $(\mathbb{Q}, +)$ (see e.g. \cite[Fact 2.2]{ugurlu}).  However, the additive group of integers, $(\mathbb{Z}, +)$, is not a pseudofinite group, since while all finite groups satisfy  the following first order statement
$$\text{the map $x\mapsto x+x$ is one-to-one if and only if it is onto},$$
 the group $(\mathbb{Z},+)$ does not.

In the following remark, we mention another first order property shared by all finite groups. This property will be an important ingredient of our proof.
\begin{remark}\label{inv}
In any finite group $G$, two involutions $g, h$ are either conjugate or there is an involution $y$ commuting with both $g$ and $h$. The first order sentence below shows that  this statement can be expressed in a  first order way in the language of groups.
$$\forall g,h \ \ \left[(g\neq 1\neq h) \wedge  (g^2=1=h^2)\right] \ \longrightarrow$$
$$\left[\left(\exists x\ g^x=h)\vee (\exists y\ (y\neq 1) \wedge (y^2=1) \wedge (g^y=g) \wedge (h^y=h)\right)\right].$$
Since this property is satisfied by all finite groups,  pseudofinite groups satisfy it as well.
\end{remark}

Although pseudofinite groups are in a way similar to finite groups, there are also many differences. For example, while all finite groups are isomorphic to linear groups, this is not true for pseudofinite groups. To see this, it is enough to construct a pseudofinite group which does not have  finite centralizer dimension since all linear groups have finite centralizer dimension.  Consider a non-principal ultraproduct of alternating groups,  $G=\prod A_n/\mathcal{U}$, such that there is no bound on the orders of the alternating groups in the ultraproduct (if there is a bound, then the ultraproduct is finite, that is, $G$ is not a pseudofinite group). By just considering centralizers of disjoint even number of transpositions, it easy to see that the centralizer dimension of the  alternating groups increases as the rank increases. Since having finite centralizer dimension $c$ is a first order property of groups (see \cite{duncan}), the ultraproduct $\prod A_n/\mathcal{U}$ has finite centralizer dimension  if only if there is a bound on the orders of the alternating groups in the ultraproduct. However, by our assumption there is no bound on the orders of the alternating groups. This proves that $\prod A_n/\mathcal{U}$ does not have finite centralizer dimension, and hence it is not linear.

We will need the following  result  about  pseudofinite groups.


\begin{fact} [Houcine and Point, Lemma 2.16 in \cite{point1}] \label{point1} 
 Let $G$ be a pseudofinite group. Any definable subgroup or any quotient by a
definable normal subgroup is (pseudo)finite.
\end{fact}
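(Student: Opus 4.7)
The plan is to exploit the ultraproduct representation of pseudofinite groups and transfer the definable construction factor-wise. By the characterization recalled just before the fact, $G$ is elementarily equivalent to a non-principal ultraproduct of finite groups; since the two properties asserted about $G$ (having a definable subgroup of a given ``shape'', or having a definable normal subgroup with a given quotient) are elementary, we may work directly with $G=\prod_{i\in I} G_i/\mathcal{U}$ where each $G_i$ is finite and $\mathcal{U}$ is a non-principal ultrafilter on $I$.

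For the definable subgroup case, write $H=\varphi(G,\bar a)$ for some formula $\varphi(x,\bar y)$ and tuple $\bar a\in G$, and represent $\bar a$ by a sequence $(\bar a_i)_{i\in I}$ with $\bar a_i\in G_i$. The statement ``$\varphi(\cdot,\bar y)$ defines a subgroup'' is a first-order sentence about $\bar y$ (closure under the group operations and containing the identity), so by \los\ the set $H_i:=\varphi(G_i,\bar a_i)$ is a subgroup of $G_i$ for $\mathcal{U}$-almost all $i$. A further application of \los\ to the formula $\varphi(x,\bar y)$ itself gives a canonical identification $H=\prod_{i\in I} H_i/\mathcal{U}$. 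Each $H_i$ is finite, hence $H$ is an ultraproduct of finite groups: if the ultraproduct is finite we are done, and otherwise $H$ is, by definition, pseudofinite.

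For the definable normal subgroup case, apply the above argument to $N=\psi(G,\bar a)$ to get finite normal subgroups $N_i\trianglelefteq G_i$ for $\mathcal{U}$-almost every $i$ with $N=\prod_i N_i/\mathcal{U}$. Consider the map
\[
\Phi\colon G\longrightarrow \prod_{i\in I}(G_i/N_i)\big/\mathcal{U},\qquad [(g_i)]\longmapsto [(g_iN_i)].
\]
Verifying that $\Phi$ is a well-defined surjective homomorphism whose kernel is exactly $N$ gives $G/N\cong \prod_i(G_i/N_i)/\mathcal{U}$, again an ultraproduct of finite groups, so $G/N$ is finite or pseudofinite.

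The main technical obstacle is the justification of the isomorphism $G/N\cong \prod_i(G_i/N_i)/\mathcal{U}$: one has to check (i) that $\Phi$ does not depend on the choice of representative sequence $(g_i)$ for a class in $G$, (ii) that it is surjective onto the class of every sequence $(h_iN_i)$, and (iii) that $\ker\Phi$ coincides with the externally defined subgroup $N$. All three verifications reduce to careful applications of \los\ to the formulas expressing ``$g\in N$'' and ``$g_1g_2^{-1}\in N$'', together with the fact that a sequence $(h_i)\in\prod G_i$ is a perfectly good representative for both $[(h_i)]\in G$ and $[(h_iN_i)]\in\prod_i(G_i/N_i)/\mathcal{U}$.
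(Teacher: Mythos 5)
The paper does not prove this statement: it is imported verbatim as Fact~\ref{point1} from Houcine and Point, so there is no in-paper argument to compare against. Your strategy --- realize $G$ as an ultraproduct $\prod_i G_i/\mathcal{U}$ of finite groups, cut out $H_i=\varphi(G_i,\bar a_i)$ (resp.\ $N_i=\psi(G_i,\bar a_i)$) factor-wise via \los, and identify $H$ with $\prod_i H_i/\mathcal{U}$ and $G/N$ with $\prod_i (G_i/N_i)/\mathcal{U}$ --- is sound once you are actually sitting inside an ultraproduct; the three verifications you list for $\Phi$ all go through by routine applications of \los.

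The genuine gap is the opening reduction. A pseudofinite group is only \emph{elementarily equivalent} to an ultraproduct of finite groups, and the property you need to transfer back --- ``every definable subgroup, with arbitrary parameters, is finite or pseudofinite'' --- is not a first-order sentence, nor a set of sentences in any obvious way: pseudofiniteness is not finitely axiomatizable and the statement quantifies over all formulas and all parameter tuples. Declaring these properties ``elementary'' therefore assumes the hard part. The step can be repaired: since a countably complete non-principal ultrafilter would force an ultraproduct of finite groups to be finite, the ultrafilter is countably incomplete, the ultraproduct is $\aleph_1$-saturated, the type of the parameter tuple $\bar a$ is realized by some $\bar a'$ in $\prod_i G_i/\mathcal{U}$, and $(G,\bar a)\equiv(\prod_i G_i/\mathcal{U},\bar a')$ then transfers the relativized theory of $\varphi(\cdot,\bar a)$ sentence by sentence. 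But once relativization enters, the ultraproduct is superfluous: for every group sentence $\sigma$ true in all finite groups and every formula $\varphi(x,\bar y)$, the sentence $\forall\bar y\,\bigl(\text{``}\varphi(\cdot,\bar y)\text{ is a subgroup''}\rightarrow\sigma^{\varphi(\cdot,\bar y)}\bigr)$ holds in every finite group, hence in $G$; applied to $\bar y=\bar a$ it shows directly that an infinite definable subgroup models the theory of finite groups, and the same relativization with equality read modulo $\psi(\cdot,\bar a)$ handles the quotient. That shorter argument is fully general and is presumably the one behind the cited lemma.
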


Note that when we say \emph{definable} we mean definable in the language of groups and possibly with parameters (for details see for example the book \cite{changk}). In particular finite sets are definable. It is well-known that if $X$ is a definable set in a group $G$ then the centralizer and the normalizer of $X$ in $G$ are definable. Moreover, if $G$ is a group of finite centralizer dimension then the centralizer of any set in $G$ is definable.

In the proof of the following proposition, we will use the well-known results about definability mentioned above as well as some ideas from the proof of a similar result in the context of groups of finite Morley rank (see Lemma 6.6 in \cite{abc}).
\begin{theorem}\label{main}
If one of the Sylow $2$-subgroups of a pseudofinite  $\mathfrak{M}_c$-group $G$  is finite then all Sylow $2$-subgroups of $G$ are conjugate and hence finite.
\end{theorem}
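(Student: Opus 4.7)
My approach is induction on $|S|$, where $S$ is the given finite Sylow $2$-subgroup. The base case $|S|=1$ is vacuous: $G$ has no involutions, so by maximality every other Sylow $2$-subgroup is trivial as well and all are conjugate. For the inductive step let $T$ be any other Sylow $2$-subgroup. By Remark~\ref{locfin} the Sylow $2$-subgroups of $G$ are nilpotent-by-finite and locally nilpotent, hence by Fact~\ref{sol} they have non-trivial centers; since they are $2$-groups, $Z(S)$ and $Z(T)$ therefore contain involutions, say $g\in Z(S)$ and $h\in Z(T)$.

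The pivotal step is to compare $g$ and $h$ via Remark~\ref{inv}. If $g$ and $h$ are conjugate, say $h=g^x$, then $T^{x^{-1}}$ has $g$ as a central involution, so both $S$ and $T^{x^{-1}}$ are Sylow $2$-subgroups of $H:=C_G(g)$. Now $C_G(g)$ is definable, so $H$ is pseudofinite by Fact~\ref{point1} and inherits the $\mathfrak{M}_c$ property; passing to the quotient $\bar H:=H/\langle g\rangle$ (again pseudofinite $\mathfrak{M}_c$ by Fact~\ref{point1}), the images $\bar S$ and $\overline{T^{x^{-1}}}$ are Sylow $2$-subgroups of $\bar H$, because any $2$-subgroup of $\bar H$ lifts to a $2$-subgroup of $H$ via the $2$-group kernel $\langle g\rangle$. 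Since $|\bar S|=|S|/2<|S|$, the inductive hypothesis yields $\bar a\in \bar H$ with $\bar S^{\bar a}=\overline{T^{x^{-1}}}$, and lifting through $a\in C_G(g)$ (which forces $\langle g\rangle\leq S^a\cap T^{x^{-1}}$) gives $S^a=T^{x^{-1}}$, whence $T=S^{ax}$.

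If instead the second alternative of Remark~\ref{inv} holds, with an involution $y$ commuting with both $g$ and $h$, the plan is to reduce to the conjugate case. Since $y\in C_G(g)=H$, its image $\bar y$ is a $2$-element of $\bar H$; by the inductive hypothesis applied to $\bar H$, its Sylow $2$-subgroups are all conjugate to $\bar S$, so $\bar y$ lies in some $\bar S^{\bar z}$, and pulling back through $z\in C_G(g)$ (which forces $g\in S^z$, so $S^z\langle g\rangle=S^z$) places $y$ in the Sylow $2$-subgroup $S^z$. After replacing $S$ by $S^z$ we may assume $y\in S$, so $g$ and $y$ are both involutions of $S$ and $y$ still commutes with $h\in Z(T)$. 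From here the intended finishing move is to feed a relevant pair of involutions back into Remark~\ref{inv} inside the pseudofinite $\mathfrak{M}_c$-group $C_G(y)$, exploiting the descending chain condition on centralizers to ensure the iteration terminates: one either produces a $G$-conjugation between a central involution of some conjugate of $S$ and $h$ (returning us to the first alternative), or the chain stabilizes into a configuration from which direct conjugacy of $S$ and $T$ can be read off.

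The main obstacle is precisely this second-alternative reduction. The induction is entirely driven by the finiteness of $S$ via the successive quotients $C_G(\cdot)/\langle\cdot\rangle$, and no symmetric procedure is available on the $T$-side because the finiteness of $T$ is what we are ultimately trying to prove. Bridging this asymmetry---moving $y$ into $S$ up to conjugation and then converting the resulting pairwise commutativity into genuine $G$-conjugacy of central involutions---is the delicate technical step, and it is here that both the pseudofiniteness (via Remark~\ref{inv}) and the $\mathfrak{M}_c$ chain condition are simultaneously indispensable.
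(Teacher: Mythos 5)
There is a genuine gap, and you have identified it yourself: the second alternative of Remark~\ref{inv} (non-conjugate involutions with a common commuting involution $y$) is never actually resolved. The ``finishing move'' you sketch --- iterating Remark~\ref{inv} inside $C_G(y)$ and appealing to the descending chain condition on centralizers --- is not a proof: you never exhibit an actual descending chain of centralizers, nor any quantity that strictly decreases, so there is no termination argument. Worse, the reduction to $C_G(y)$ does not carry your data with it: $y$ need not be central in $S^z$ and certainly not in $T$, so neither $S^z$ nor $T$ lies in $C_G(y)$, and you are left comparing Sylow $2$-subgroups of $C_G(y)$ that are a priori unrelated to $S$ and $T$ (and possibly infinite, so your induction on $|S|$ has no foothold there). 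A secondary, fixable issue in your conjugate case: you apply the inductive hypothesis to $\bar H = C_G(g)/\langle g\rangle$, which requires $\bar H$ to be an $\mathfrak{M}_c$-group; Fact~\ref{point1} gives pseudofiniteness but preservation of the $\mathfrak{M}_c$ condition under a quotient by a finite normal subgroup is asserted without justification and is not among the quoted facts.

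The paper avoids both problems by running a different induction. Instead of inducting on $|S|$, fix the finite Sylow $2$-subgroup $P$ and choose a non-conjugate $Q$ maximizing $|D|$ where $D = P\cap Q$ (possible since $D\leqslant P$ is finite). Since $D$ is finite it is definable, $N_G(D)/D$ is (pseudo)finite by Fact~\ref{point1}, and the normalizer condition (Remark~\ref{locfin}) gives involutions $\bar i\in N_P(D)/D$ and $\bar j\in N_Q(D)/D$. Remark~\ref{inv} is then applied in $N_G(D)/D$, and --- this is the step your sketch is missing --- in the commuting case one introduces Sylow $2$-subgroups $R_i\supseteq\langle D,i,k\rangle$ and $R_j\supseteq\langle D,j,k\rangle$ and chains through the three pairs $(P,R_i)$, $(R_i,R_j)$, $(R_j,Q)$: each pair has intersection strictly larger than $D$, and at least one pair must be non-conjugate with one member conjugate to $P$, contradicting the maximality of $|D|$. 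That maximal-intersection device is what converts the ``commuting involution'' alternative into a terminating argument; without it, or some equally concrete replacement, your proposal does not close.
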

\begin{proof}
Let $P$ be a finite Sylow $2$-subgroup of $G$. Assume that there is a  Sylow $2$-subgroup $Q$ of $G$ which is not  conjugate to $P$ and let $D=P\cap Q$. Without loss of generality, we may assume that $Q$ is chosen so that  $|D|$ is maximal (for this fixed finite  Sylow $2$-subgroup $P$).  Since $D$ is finite, both $D$ and $N_G(D)$ are definable and hence $N_G(D)/D$ is a (pseudo)finite group  by Fact~\ref{point1}.  Moreover, as both $P$ and $Q$ are locally finite (Fact~\ref{wagner}), locally nilpotent and nilpotent-by-finite, they satisfy the normalizer condition (see Remark~\ref{locfin}).  Therefore,  we have $$N_G(D)\cap P=N_P(D) > D \ \  \text{and} \ \  N_G(D)\cap Q=N_Q(D) > D.$$
\begin{claim*}
There are non-conjugate Sylow $2$-subgroups $P_1, Q_1$ of $G$ such that $|P_1 \cap Q_1|> |P \cap Q|$, and $P$ and $P_1$ are conjugate.
\end{claim*}
\begin{proof}[Proof of the claim]
  Take two involutions $\bar{i}=i D, \bar{j}= j D$ from the nontrivial $2$-subgroups $N_P(D)/D$ and $N_Q(D)/D$ of $N_G(D)/D$.  We know that they are either conjugate or commute with another  involution in $N_G(D)/D$ since  $N_G(D)/D$ is (pseudo)-finite (see Remark~\ref{inv}).
\begin{case}  Assume that $\bar{i}, \bar{j}$ are conjugate in $N_G(D)/D$.
 \end{case}
In this case, we have $\bar{i}^{\bar{x}}=\bar{j}$ for some $\bar{x} \in N_G(D)/D$. This means that $xix^{-1}D=j D$, that is, $j=xix^{-1}d$ for some $d\in D$. We get $j=xix^{-1}d x x^{-1}=x i d_1 x^{-1}$ for some $d_1\in D$. But since $i d_1 \in P$, we get $j\in P^x \cap Q$. Moreover, since $x$ normalizes $D$, we get $D\leqslant P^x$ and hence $D \leqslant P^x \cap Q$. So,  we have $D < \langle D,j \rangle \leqslant P^x \cap Q$.  So take $P_1=P^x$ and $Q_1=Q$.

  \begin{case} Assume that there is an involution $\bar{k} \in N_G(D)/D$ such that  $\bar{i}$ and $\bar{j}$ commute with $\bar{k}$.
   \end{case}
  Now, consider the $2$-groups $\langle D,i,k \rangle$ and $\langle D, j, k\rangle$ and let $R_i$ and $R_j$ denote the Sylow $2$-subgroups of $G$ containing them respectively. Clearly we have the following inclusions:
 $$D < \langle D,i \rangle \leqslant P \cap R_i, \ \ D < \langle D,k \rangle \leqslant R_i \cap R_j,  \ \ D < \langle D,j \rangle \leqslant R_j \cap Q.$$ If $P$ is not conjugate to $R_i$  then take $P_1=P$ and $Q_1=R_i$. If $P$ is conjugate to $R_i$ but not conjugate to $R_j$ then take $P_1=R_i$ and $Q_1=R_j$. If $P$ is conjugate to both $R_i$ and $R_j$ then take $P_1=R_j$ and $Q_1=Q$.
\vspace{0.2 cm}

  So, the claim follows.
  \end{proof}
Let $P_1, Q_1$ be non-conjugate Sylow $2$-subgroups of $G$ as in the claim so that $P_1=P^g$ for some $g\in G$. Now, we have $$|D|=|P \cap Q| <|P_1 \cap Q_1|=|P^g \cap Q_1|=|(P^g \cap Q_1)^{g^{-1}}|=|P \cap Q_1^{g^{-1}}|.$$
Clearly, $Q_1^{g^{-1}}$ is a Sylow $2$-subgroup of $G$. By the maximality of $|D|$, we conclude that $P$ is conjugate to $Q_1^{g^{-1}}$ and hence conjugate to $Q_1$. This contradicts to the fact that $P_1=P^g$ is not conjugate to $Q_1$.
\end{proof}
\begin{remark}
The situation is quite complicated when we remove the assumption (in Theorem~\ref{main}) on the existence of a finite Sylow $2$-subgroup, even in the linear case (work in progress).

When we restrict ourselves to $\gl_n(K)$, there is a criterion given by Vol'vachev (for an arbitrary field $K$)  about the conjugacy of the Sylow $p$-subgroups (see \cite{volvachev2}). This criterion implies in particular that non-conjugacy can occur only for Sylow $2$-subgroups and only when the characteristic of the field $K$ is zero.
\end{remark}
\section{On Platonov's example}
In this section, we analyze  in detail an example constructed by Platonov  (Example 4.11 in \cite{platonov}). The reason for the detailed  presentation below is the fact that  some computational arguments  are skipped in Platonov's original article \cite{platonov}.


For each $i\in \mathbb{N}$, consider the following elements in the group $\sll_2(\mathbb{Q})$
$$g_i=\left(\begin{array}{cc}
          0& - p_i\\
          p_i^{-1}& 0\\
        \end{array}\right),
$$
where  $(p_i)_{i\in \mathbb{N}}$ is a sequence of distinct primes of the form $4k+3$, $k\in \mathbb{N}$.

We will observe that $S_i:=\langle g_i \rangle$ is a Sylow $2$-subgroup of $\sll_2(\mathbb{Q})$ of order $4$ for each $i$, however,  $S_i$ is not conjugate to $S_j$ if $i\neq j$.

Clearly, for each $i$ we have $|S_i|=4$.
\begin{claim}\label{cl1}
For each $i$, the group $S_i$ is a Sylow $2$-subgroup of $\sll_2(\mathbb{Q})$.
\end{claim}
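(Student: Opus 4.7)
The plan is to show that any $2$-subgroup of $\sll_2(\mathbb{Q})$ containing $S_i$ must equal $S_i$. First I would verify $|S_i|=4$ by the direct matrix computation $g_i^2=-I$ (independent of $i$), so $g_i$ has order $4$ and $S_i=\{I,g_i,-I,-g_i\}$.

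Next I would determine which $2$-power orders can occur in $\sll_2(\mathbb{Q})$. An element $h$ of finite order $2^n$ in $\sll_2(\mathbb{Q})$ is semisimple with eigenvalues a primitive $2^n$-th root of unity $\zeta$ and its inverse $\zeta^{-1}$ (since $\det h=1$), so $\operatorname{tr}(h)=2\cos(2\pi k/2^n)\in\mathbb{Q}$ for some $k$ coprime to $2^n$; this value is rational only for $n\leqslant 2$. Hence every element of $2$-power order in $\sll_2(\mathbb{Q})$ has order $1$, $2$, or $4$.

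Then I would observe that every finite subgroup $H$ of $\sll_2(\mathbb{Q})\leqslant\sll_2(\mathbb{R})$ is cyclic: $H$ fixes a point of the hyperbolic upper half-plane (by averaging its action via M\"obius transformations), hence is conjugate in $\sll_2(\mathbb{R})$ into the stabilizer of that point, namely the maximal compact torus $\operatorname{SO}(2)$. Combined with the previous step, every finite $2$-subgroup of $\sll_2(\mathbb{Q})$ is cyclic of order at most $4$.

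Finally, if $S_i\leqslant T$ for some $2$-subgroup $T$ of $\sll_2(\mathbb{Q})$, then $T$ is locally finite by Fact~\ref{wagner}, so for every $t\in T$ the subgroup $\langle S_i,t\rangle$ is a finite $2$-subgroup, and hence cyclic of order $\leqslant 4$ by the previous paragraph; containing $S_i$ of order $4$ forces $\langle S_i,t\rangle=S_i$ and thus $t\in S_i$. Therefore $T=S_i$, proving $S_i$ is a Sylow $2$-subgroup. I expect the cyclicity of finite $2$-subgroups of $\sll_2(\mathbb{Q})$ to be the main hurdle; the hyperbolic-plane fixed-point argument (or, equivalently, conjugacy into a maximal compact torus) is the cleanest route and combines neatly with the trace/rationality computation that eliminates $2$-power orders exceeding $4$.
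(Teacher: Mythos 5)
Your proof is correct, and it takes a genuinely different route from the paper's. The paper argues by ruling out subgroups of order $8$: the uniqueness of the involution $-I$ and the absence of elements of order $8$ reduce the candidates to $Q_8$, which is then eliminated by an explicit computation with anticommuting matrices in $\gl_2(\mathbb{R})$; finiteness of Sylow $2$-subgroups then follows from Schur's theorem on periodic linear groups, and the two facts together give maximality of $S_i$. You instead prove the stronger structural statement that every finite subgroup of $\sll_2(\mathbb{R})$ is cyclic (conjugate into $\operatorname{SO}(2)$ via the fixed-point or averaging argument), and then cap the order of a cyclic $2$-group at $4$ by the rationality of the trace $\zeta+\zeta^{-1}$. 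This is more conceptual and avoids both the classification of groups of order $8$ and the matrix computation, at the price of importing the geometry of the $\sll_2(\mathbb{R})$-action on the hyperbolic plane (equivalently, the existence of an invariant inner product). One small point of hygiene in your last step: Fact~\ref{wagner} as stated applies to Sylow $2$-subgroups, whereas your $T$ is an arbitrary $2$-subgroup; either embed $T$ in a Sylow $2$-subgroup by Zorn's lemma first, or simply invoke Schur's theorem (as the paper does in its proof of property $(5)$) to see that the finitely generated periodic linear group $\langle g_i,t\rangle$ is finite. With that adjustment the argument is complete.
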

Since a proof for this claim is not provided in \cite{platonov},  we list some  properties of $\sll_2(\mathbb{Q})$ (some of which are very well-known) which lead to a proof of  Claim~\ref{cl1}.

\begin{enumerate}[(1)]
  \item If $A \in \sll_2(\mathbb{Q})$ has finite order then $A$ is diagonalizable over $\mathbb{C}$.
  \item The group $\sll_2(\mathbb{Q})$ has a unique involution.
  \item There is no element of order $8$ in  $\sll_2(\mathbb{Q})$.
  \item \label{claim4} There is no subgroup of  order $8$ in $\sll_2(\mathbb{Q})$.
  \item \label{claim5} Sylow $2$-subgroups of $\sll_2(\mathbb{Q})$  are finite.
\end{enumerate}

Note that the properties $(1) - (3)$ follow from basic results in linear algebra. However, since $(4)$ and $(5)$ are more involved, we would like to support  them with proofs.

\begin{proof}[Proof of $(4)$.]
Assume that $H \leqslant \sll_2(\mathbb{Q})$ such that $|H|=8$. We know that there are only five groups of order $8$ up to isomorphism: $C_8$,  \  $C_2\times C_2\times C_2$,   \  $C_4 \times C_2$, \ $D_8$ \ (Dihedral group of order \ $8$), \ $Q_8$ \ (Quaternions). Since $\sll_2(\mathbb{Q})$ has a unique involution, we have $H \cong Q_8$ or $H \cong C_8$. However, as $\sll_2(\mathbb{Q})$ has no element of order $8$, the latter is not possible and hence $H \cong Q_8$.

Now, we will show that $Q_8$ does not embed in $\sll_2(\mathbb{Q})$ (actually, we can prove more: $Q_8$ does not embed in $\gl_2(\mathbb{R})$). By the structure of $Q_8$, it is enough to show that there are no $A, B \in \gl_2(\mathbb{R})$ such that \begin{equation} A^2=B^2=-I_2 \ \ \text{and} \ \ AB=-BA. \tag{$*$}
\end{equation}
We will observe $(*)$ in two steps.
\begin{step}
If $A \in \gl_2(\mathbb{R})$ is an element of order $4$ then there is $g\in \gl_2(\mathbb{R})$ such that $$A^g= \left(
        \begin{array}{cc}
          0 & 1 \\
         -1 & 0 \\
        \end{array}
      \right).$$
\end{step}
There is $h\in \gl_2(\mathbb{C})$ such that $A^h=\left(
        \begin{array}{cc}
         \lambda_1 & 0 \\
         0 & \lambda_2 \\
        \end{array}
      \right)$ where $\lambda_1^4=\lambda_2^4=1$. Without loss of generality we may assume that $\lambda_1$ is a primitive $4$th root of unity, that is, $\lambda_1=\pm i$.

      First assume $\lambda_1 = i$ and   let $\vec{z}=\vec{x}+\vec{y} i$ be a corresponding eigenvector (note that $\vec{x}, \vec{y} \in \mathbb{R}^2$). We have
\begin{align*}
 A \vec{z} &=  i \vec{z} \\
 A (\vec{x}+ \vec{y} i) &=  i (\vec{x}+ \vec{y} i) \\
 A \vec{x}+ A \vec{y} i &=  i \vec{x} - \vec{y}.
\end{align*}
Therefore, we get $ A \vec{x} = - \vec{y}$ and $A \vec{y} =  \vec{x}$. Note that $\{\vec{x}, \vec{y}\}$ forms a basis for $\mathbb{R}^2$ since they are linearly independent over $\mathbb{R}$ (If $\vec y=\alpha \vec x$ for some $\alpha \in \mathbb{R}$, then we get $A \vec{x} = - \alpha \vec{x}$ and $A \alpha \vec{x} =  \vec{x}$, which in turn gives $\alpha^2= -1$; a contradiction). When we represent $A$ with respect to this basis, we can conclude that $A$ is conjugate to the matrix $\left(
        \begin{array}{cc}
          0 & 1 \\
         -1 & 0 \\
        \end{array}
      \right)$. Similarly, if $\lambda_1 = -i$, then $A$ is conjugate to $\left(
        \begin{array}{cc}
          0 & -1 \\
         1 & 0 \\
        \end{array}
      \right)$ in $\gl_2(\mathbb{R})$ which is in turn conjugate to $\left(
        \begin{array}{cc}
          0 & 1 \\
         -1 & 0 \\
        \end{array}
      \right)$.
\begin{step}
There are  no  $A, B \in \gl_2(\mathbb{R})$ satisfying the conditions ($*$).
\end{step}
Assume that there are $A, B \in \gl_2(\mathbb{R})$ satisfying the conditions ($*$).
Using Step 1, without loss of generality, we can assume that   $A= \left(
        \begin{array}{cc}
          0 & 1 \\
         -1 & 0 \\
        \end{array}
      \right).$
Let $B=\left(
        \begin{array}{cc}
          a & b \\
         c & d \\
        \end{array}
      \right)$. Since $AB=-BA$,  we get $B=\left(
        \begin{array}{cc}
          a & b \\
         b & -a \\
        \end{array}
      \right)$. But on the other hand, since the order of $B$ is $4$, its square, $B^2 = \left(
        \begin{array}{cc}
          a^2+b^2 & 0 \\
         0 & a^2+b^2 \\
        \end{array}
      \right)$ is an involution. By the uniqueness of the involution, we get $$\left(
        \begin{array}{cc}
          a^2+b^2 & 0 \\
         0 & a^2+b^2 \\
        \end{array}
      \right)= \left(
        \begin{array}{cc}
          -1 & 0 \\
         0 & -1\\
        \end{array}
      \right),$$ which leads to a contradiction  since $a, b \in \mathbb{R}$.
      \end{proof}

\begin{proof}[Proof of $(5)$.]
Assume that $\sll_2(\mathbb{Q})$ has an infinite Sylow $2$-subgroup $P$. Since periodic linear groups are locally finite (see \cite{schur1911ueber}), $P$ is locally finite. Therefore, $P$ has a finite subgroup, say $X$, of order greater than $8$ (just consider the subgroup generated by $8$ distinct elements of $P$). Then, by Sylow's first theorem, $X$ has a subgroup of order $8$, which is clearly a subgroup of $\sll_2(\mathbb{Q})$. Since this is not possible, $(5)$ follows.
\end{proof}

By properties $(4)$ and $(5)$, we  conclude  that the groups $S_i=\langle g_i\rangle$ defined above are Sylow $2$-subgroups of $\sll_2(\mathbb{Q})$.

\begin{claim}
For $i\neq j$, the groups $S_i, S_j$ are not conjugate in $\sll_2(\mathbb{Q})$.
\end{claim}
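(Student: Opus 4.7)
The plan is to reduce conjugacy of the cyclic order-$4$ subgroups $S_i$ and $S_j$ to conjugacy of generators. Since $S_j$ contains exactly two elements of order $4$, namely $g_j$ and $g_j^{-1}$, any $h\in\sll_2(\mathbb{Q})$ with $hS_ih^{-1}=S_j$ must satisfy $hg_ih^{-1}\in\{g_j,g_j^{-1}\}$. So it suffices to show that neither of the matrix equations $hg_i=g_jh$ nor $hg_i=g_j^{-1}h$ admits a solution with $\det(h)=1$.

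For each case, I would write $h=\left(\begin{smallmatrix}a&b\\c&d\end{smallmatrix}\right)$ and expand the commutation relation entry by entry. The four resulting scalar equations collapse to two independent ones, which express $b$ and $d$ linearly in terms of $a$ and $c$. Substituting into $\det(h)=ad-bc$ yields a closed-form expression for $\det(h)$ purely in $a$, $c$, $p_i$, $p_j$.

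In the case $hg_ih^{-1}=g_j$, the relations give $b=-p_ip_jc$ and $d=(p_i/p_j)a$, so that $\det(h)=(p_i/p_j)(a^2+p_j^2c^2)$. Setting this equal to $1$ and clearing denominators yields
\[
p_ip_j=(p_ia)^2+(p_ip_jc)^2,
\]
i.e., $p_ip_j$ is a sum of two rational squares. The key arithmetic ingredient is the classical two-squares theorem: a positive integer $N$ is a sum of two rational squares if and only if every prime divisor of $N$ congruent to $3\pmod 4$ appears to an even power. Since $p_i$ and $p_j$ are distinct primes with $p_i,p_j\equiv 3\pmod 4$, each occurs to the first power in $p_ip_j$, and the required representation is impossible. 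In the other case $hg_ih^{-1}=g_j^{-1}$, the analogous relations produce $\det(h)=-(p_i/p_j)(a^2+p_j^2c^2)\leqslant 0$, which cannot equal $1$ (and the boundary case $a=c=0$ forces $h=0$, non-invertible).

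The main (and essentially only) obstacle is invoking the two-squares theorem correctly to rule out the rational representation $p_ip_j=x^2+y^2$; the rest is a routine matrix computation. The hypothesis that the primes are chosen in the arithmetic progression $4k+3$ is used in precisely one place, namely at the point where the two-squares obstruction is applied, and this is why that congruence condition appears in Platonov's construction.
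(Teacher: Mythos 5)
Your proof is correct and follows essentially the same route as the paper: the same reduction to $hg_ih^{-1}\in\{g_j,g_j^{-1}\}$, the same matrix computation yielding $p_i(a^2+p_j^2c^2)=\pm p_j$, and the same arithmetic obstruction from primes $\equiv 3\pmod 4$. The only cosmetic difference is that you invoke the classical two-squares theorem for $p_ip_j$, while the paper phrases the identical obstruction as the parity of the $p_j$-adic valuation of a sum of two rational squares (proved via Gaussian integers).
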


Assume  that $S_i=S_j^g$ for some $g= \left(
        \begin{array}{cc}
          a & b \\
         c & d\\
        \end{array}
      \right) \in \sll_2(\mathbb{Q})$.  Then either we have $ g g_i g^{-1}=  g_j$ or $g g_i g^{-1}=  g_j^3$. We consider the first case.   \begin{align*}
 g g_i g^{-1}&=  g_j  \\
 g g_i &=  g_j g \\
  \left(
        \begin{array}{cc}
          a & b \\
         c & d\\
        \end{array}
      \right) \left(
        \begin{array}{cc}
          0 & - p_i \\
          p_i^{-1} & 0 \\
        \end{array}
      \right)&=  \left(
        \begin{array}{cc}
          0 & - p_j \\
          p_j^{-1} & 0 \\
        \end{array}
      \right) \left(
        \begin{array}{cc}
          a & b \\
         c & d\\
        \end{array}
      \right).
      \end{align*}
 This equation gives the following equalities:
 $$bp_i^{-1}=-cp_j, \ \ dp_i^{-1}=ap_j^{-1}, \ \ -ap_i=-dp_j, \ \ -cp_i=bp_j^{-1}.$$
Multiplying both sides of the first and third equations by $-c p_i$ and $-a p_j^{-1}$ respectively we get $$-bc=c^2p_j p_i, \ \ ad=a^2p_ip_j^{-1}.$$ By combining with the fact that  $ad-bc=1$, we have $$a^2p_ip_j^{-1}+c^2p_j p_i=1,$$ which in turn gives \begin{equation}
p_i(a^2+c^2p_j^2)=p_j.\tag{$\diamond$}
\end{equation}

We give an argument  for the impossibility of ($\diamond$), since it is skipped in \cite{platonov}. Note that the $p_j$-adic valuation of the right-hand side of the equation ($\diamond$) is clearly $1$. However, the $p_j$-adic valuation of the left-hand side is even by the following fact which is a folklore.

\begin{fact}
Suppose $p$ is a prime such that $p\equiv 3 (\text{mod}~4)$ and $\alpha, \beta \in \mathbb{Q}$. Then $v_p(\alpha^2+\beta^2)$ is even, where $v_p$ denotes the $p$-adic valuation.
\end{fact}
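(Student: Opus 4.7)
The plan is to first reduce the statement to integers, then use the classical fact that $-1$ is not a quadratic residue modulo any prime $p \equiv 3 \pmod 4$.

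For the reduction, I would clear denominators: write $\alpha = a/n$ and $\beta = b/n$ with $a, b \in \mathbb{Z}$, $n \in \mathbb{Z}_{>0}$. Then $\alpha^2 + \beta^2 = (a^2 + b^2)/n^2$, so $v_p(\alpha^2 + \beta^2) = v_p(a^2+b^2) - 2v_p(n)$. Since $2v_p(n)$ is even, it suffices to prove that $v_p(a^2 + b^2)$ is even for every pair of integers $a, b$ (not both zero).

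Next, I would prove the key lemma: if $a, b \in \mathbb{Z}$ and $p \mid a^2 + b^2$, then $p \mid a$ and $p \mid b$. Suppose otherwise; say $p \nmid b$ (the other case is symmetric). Let $b^{-1}$ denote the inverse of $b$ modulo $p$. Then from $a^2 \equiv -b^2 \pmod p$ we obtain $(a b^{-1})^2 \equiv -1 \pmod p$, so $-1$ is a square in $(\mathbb{Z}/p\mathbb{Z})^\times$. By Euler's criterion, $-1$ is a square mod $p$ iff $(p-1)/2$ is even, i.e., iff $p \equiv 1 \pmod 4$, contradicting our hypothesis $p \equiv 3 \pmod 4$. (If $p \nmid a$ but $p \mid b$, then $p \mid b^2$ forces $p \mid a^2$, hence $p \mid a$, a contradiction.)

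With the lemma in hand, I would finish by induction on $v_p(a^2+b^2)$, or more directly: let $m = \min(v_p(a), v_p(b))$, write $a = p^m a'$, $b = p^m b'$, so that at least one of $a', b'$ is coprime to $p$. Then $a^2 + b^2 = p^{2m}(a'^2 + b'^2)$, and the lemma shows $p \nmid a'^2 + b'^2$. Hence $v_p(a^2+b^2) = 2m$, which is even, completing the proof. There is no genuine obstacle here; the only care needed is the harmless observation that the integer case handles the rational case because $n^2$ contributes an even valuation.
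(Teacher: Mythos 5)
Your proof is correct, and its overall skeleton matches the paper's: clear denominators to reduce to integers (the denominator contributes an even valuation), establish the key divisibility lemma that $p \mid a^2+b^2$ forces $p \mid a$ and $p \mid b$, and then extract factors of $p^2$ to conclude the valuation is even. Where you genuinely diverge is in the proof of the key lemma. The paper works in the ring of Gaussian integers: it shows $p$ is irreducible in $\mathbb{Z}[i]$ because $p \equiv 3 \pmod 4$ is not a sum of two squares, invokes unique factorization to upgrade irreducible to prime, and then argues that $p$ dividing $(\alpha+\beta i)(\alpha-\beta i)$ forces $p$ to divide one factor and hence both $\alpha$ and $\beta$. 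You instead argue directly in $\mathbb{Z}/p\mathbb{Z}$: if $p \nmid b$ then $(ab^{-1})^2 \equiv -1 \pmod p$, contradicting the fact (via Euler's criterion) that $-1$ is a nonresidue when $p \equiv 3 \pmod 4$. Your route is more elementary, needing only basic modular arithmetic rather than the UFD property of $\mathbb{Z}[i]$; the paper's route packages the same congruence obstruction into the splitting behaviour of $p$ in $\mathbb{Z}[i]$, which is conceptually tidier if one already has that machinery. Your finishing step, pulling out $p^{2m}$ with $m=\min(v_p(a),v_p(b))$ in one stroke, is also slightly cleaner than the paper's inductive descent, though the two are equivalent.
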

\begin{proof}
Firstly, without loss of generality we may assume that $\alpha, \beta$ are integers. To see this let $\alpha=\dfrac{\alpha_1}{\alpha_2}$, $\beta=\dfrac{\beta_1}{\beta_2}$ for some $\alpha_1, \alpha_2, \beta_1, \beta_2 \in \mathbb{Z}$. Clearly, we have $$v_p(\alpha^2+\beta^2)=v_p\left(\frac{\alpha_1^2}{\alpha_2^2}+\frac{\beta_1^2}{\beta_2^2}\right)=v_p((\alpha_1\beta_2)^2+(\beta_1\alpha_2)^2)-v_p((\alpha_2\beta_2)^2).$$ Therefore, $v_p(\alpha^2+\beta^2)$ is  even if and only if $v_p((\alpha_1\beta_2)^2+(\beta_1\alpha_2)^2)$ is even, since the valuation of a square is always even.

Since $p\equiv 3 (\text{mod}~4)$, $p$ is irreducible in the ring of Gaussian integers $\mathbb{Z}[i]$. (If $p=(x+yi)(z+ti)$ in $\mathbb{Z}[i]$, then squaring the norms of both sides we get $p^2=(x^2+y^2)(z^2+t^2)$. Since $p$ can not be sum of two squares (which is $0,1$ modulo $4$), either $x+yi$ or $z+ti$ is a unit, that is, $p$ is irreducible.). Moreover, since $\mathbb{Z}[i]$ is a unique factorization domain $p$ is a prime in $\mathbb{Z}[i]$. Now, if $p$ does not divide $ \alpha^2+\beta^2$ clearly the $p$-adic valuation is zero. Assume $p$ divides $\alpha^2+\beta^2$ (in $\mathbb{Z}$). Then $p$ divides $(\alpha + \beta i)(\alpha - \beta i)$ in $\mathbb{Z}[i]$. Then, as a prime in $\mathbb{Z}[i]$, $p$ divides either    $\alpha + \beta i$ or $\alpha - \beta i$. But then both $\alpha$ and $\beta$ are divisible by $p$  in $\mathbb{Z}$ and so  $\alpha^2=p^2 k^2$ and  $\beta^2=p^2 l^2$ for some $k,l \in \mathbb{Z}$. So now, $\alpha^2+\beta^2 = p^2 (k^2+l^2)$. Inductively, we can conclude that $v_p(\alpha^2+\beta^2)$ is even.
\end{proof}

 The computations for the second case (when $g g_i g^{-1}=  g_j^3$) are very similar and we obtain $$p_i(a^2+c^2p_j^2)= -p_j.$$ This is clearly not possible. We conclude that $S_i$ and $S_j$ are not conjugate.

 As a result, we have observed that  $\sll_2(\mathbb{Q})$ has infinitely many pairwise non-conjugate Sylow $2$-subgroups of order $4$.

\begin{remark}
Note that Theorem~\ref{main} together with Platonov's example show that $\sll_2(\mathbb{Q})$ is not a pseudofinite group. One may also observe this directly by first defining $\mathbb{Q}^{\times}$, the multiplicative group of $\mathbb{Q}$,  as a centralizer in $\sll_2(\mathbb{Q})$ and then using the fact that $\mathbb{Q}^{\times}$ is not a pseudofinite group (note that the definable  endomorphism of $\mathbb{Q}^{\times}$ which maps   $x$ to $x^3$  is one-to-one but not onto).

More generally, one can observe that for any infinite field $K$, the group $\sll_n(K)$ is pseudofinite if and only if $K$ is a pseudofinite field (for $n> 1$).  It is well-known that  if $K$ is a pseudofinite field then $\sll_n(K)$ is a pseudofinite group. To see the other direction, we work with the Chevalley group $\psl_n(K)$ which is also pseudofinite as a definable quotient of $\sll_n(K)$ and we will refer to the article of Wilson \cite{Wil}. In this article, Wilson states the following results obtained by S. Thomas in his dissertation \cite{thomas}: The class $\{X(K) ~|~ K \ \text{field}\}$ is an elementary class where $X$ denotes a Chevalley group of untwisted type and moreover whenever $X(K) \equiv X(F)$ and $K$ is pseudofinite it follows that $F$ is pseudofinite. If we apply Wilson's theorem  \cite[Theorem on pg. 471]{Wil}, together with the result of Thomas to $\psl_n(K)$, we get $\psl_n(K)\cong PSL_n(F)$ for some pseudofinite field $F$. By referring to the morevover part of Thomas' result, we finally conclude that the field $K$ is pseudofinite.
\end{remark}

\noindent
{\it Acknowledgements}. \  I would like to thank Alexandre Borovik for suggesting this topic to me. I also would like to thank to the referees for their valuable comments and suggestions.

\bibliographystyle{acm}
\bibliography{ref}

\begin{thebibliography}{10}

\bibitem{abc}
{\sc Altinel, T., Borovik, A.~V., and Cherlin, G.}
\newblock {\em Simple groups of finite Morley rank}, vol.~145 of {\em
  Mathematical Surveys and Monographs}.
\newblock American Mathematical Society, Providence, RI, 2008.

\bibitem{ax}
{\sc Ax, J.}
\newblock The elementary theory of finite fields.
\newblock {\em Ann. Math. 88\/} (1968), 239--271.

\bibitem{bells}
{\sc Bell, J.~L., and Slomson, A.~B.}
\newblock {\em Models and ultraproducts: An introduction}.
\newblock North Holland Publishing Company, Amsterdam, 1971.

\bibitem{bryant}
{\sc Bryant, R.~M.}
\newblock Groups with minimal condition on centralizers.
\newblock {\em Journal of Algebra 60\/} (1979), 371--383.

\bibitem{changk}
{\sc Chang, C.~C., and Keisler, H.~J.}
\newblock {\em Model theory}, vol.~73 of {\em Studies in Logic and the
  Foundations of Mathematics}.
\newblock North-Holland, The Netherlands, 1990.
\newblock 3rd edition.

\bibitem{dixon}
{\sc Dixon, M.}
\newblock {\em Sylow Theory, Formations, and Fitting Classes in Locally Finite
  Groups}.
\newblock Science and Culture: Mathematics. World Scientific, 1994.

\bibitem{duncan}
{\sc Duncan, A.~J., Kazachkov, I.~V., and Remeslennikov, V.~N.}
\newblock Centraliser dimension and universal classes of groups.
\newblock {\em Siberian Electronic Math. Rep. 3\/} (2006), 197--215.

\bibitem{point1}
{\sc Houcine, A.~O., and Point, F.}
\newblock Alternatives for pseudofinite groups.
\newblock {\em Journal of Group Theory 16\/} (2013), 461--495.

\bibitem{los1}
{\sc {\L}o\'{s}, J.}
\newblock Quelques remarques, th\'{e}or\`{e}mes et probl\`{e}mes sur les
  classes d\'{e}finissables d'alg\`{e}bres.
\newblock In {\em Mathematical interpretations of formal systems}, vol.~16 of
  {\em Studies in Logic and the Foundations of Mathematics}. North Holland,
  Amsterdam, 1955, pp.~98--113.

\bibitem{platonov}
{\sc Platonov, V.~P.}
\newblock Theory of algebraic linear groups and periodic groups.
\newblock In {\em Seven Papers on Algebra}, vol.~69 of {\em American
  Mathematical Society translations, Series 2}. American Mathematical Society,
  1968, pp.~61--110.

\bibitem{schur1911ueber}
{\sc Schur, I.}
\newblock {\em Ueber Gruppen periodischer linearer Substitutionen}.
\newblock Sitzungsberichte der K{\"o}niglich-Preussischen Akademie der
  Wissenschaften zu Berlin. 1911.

\bibitem{thomas}
{\sc Thomas, S.~R.}
\newblock {\em Classification Theory of Simple Locally Finite Groups}.
\newblock PhD thesis, University of London, 1983.

\bibitem{ugurlu}
{\sc U\u{g}urlu, P.}
\newblock Pseudofinite groups as fixed points in simple groups of finite
  {M}orley rank.
\newblock {\em Journal of Pure and Applied Algebra 217\/} (2013), 892--900.

\bibitem{volvachev2}
{\sc Vol'vachev, R.~T.}
\newblock Sylow $p$-subgroups of the general linear group.
\newblock In {\em Nine Papers on Logic and Group Theory}, vol.~64 of {\em
  American Mathematical Society translations, Series 2}. American Mathematical
  Society, 1967, pp.~216--243.

\bibitem{wagner1}
{\sc Wagner, F.~O.}
\newblock Nilpotency in groups with the minimal condition on centralizers.
\newblock {\em Journal of Algebra 217\/} (1999), 448--460.

\bibitem{wehf}
{\sc Wehrfritz, B. A.~F.}
\newblock Remarks on centrality and cyclicity in linear groups.
\newblock {\em Journal of Algebra 18\/} (1971), 229--236.

\bibitem{Wil}
{\sc Wilson, J.~S.}
\newblock On simple pseudofinite groups.
\newblock {\em J. London Math. Soc. 51\/} (1993), 471--490.

\bibitem{wilson}
{\sc Wilson, J.~S.}
\newblock First-order group theory.
\newblock In {\em Infinite Groups 1994}. Walter de Gruyter and co., Berlin,
  1996, pp.~301--314.

\bibitem{algebra}
{\sc Zalesskij, A.~E.}
\newblock {\em Linear groups}, vol.~37 of {\em Encyclopaedia of Mathematical
  Sciences}.
\newblock Springer Berlin Heidelberg, 1993.

\end{thebibliography}
\end{document}